
\documentclass[letterpaper, 10 pt, conference]{ieeeconf}  

\IEEEoverridecommandlockouts                              
\overrideIEEEmargins


\usepackage{graphicx}          

\usepackage{textcomp}

\usepackage{amsmath}
\usepackage{amssymb,color}
\usepackage{amsfonts}
\usepackage{subcaption}

\usepackage{algorithm}

\usepackage{algpseudocode}

\usepackage{mathtools}

\usepackage{enumitem}

\usepackage[dvipsnames]{xcolor}
\usepackage{tikz}
\usetikzlibrary{3d,calc}
\usepackage{pgfplots}
\usepackage{bm,comment}
\usepgfplotslibrary{fillbetween}
\usetikzlibrary{arrows,hobby,tikzmark}
\usetikzlibrary{decorations.markings,decorations.pathreplacing}
\usetikzlibrary{math}
\tikzstyle{block} = [draw, rectangle, 
    minimum height=2em, minimum width=2em]
\tikzstyle{sum} = [draw, circle, node distance=1cm]
\tikzstyle{input} = [coordinate]
\tikzstyle{output} = [coordinate]
\tikzstyle{pinstyle} = [pin edge={to-,thin,black}]

\usepackage{amsthm}
\newtheoremstyle{customthm}
  {1.5mm}
  {1.5mm}
  {\itshape}
  {}
  {\bfseries}
  {.}
  {1.5mm}
  {}
  
\theoremstyle{customthm}
\newtheorem{Theorem}{Theorem}

\newtheorem{Lemma}{Lemma}

\newtheorem{Remark}{Remark}
\newtheorem{Corollary}{Corollary}

\title{\LARGE \bf
A Dual System-Level Parameterization for Identification from Closed-Loop Data
}
\author{Amber Srivastava, Mingzhou Yin, Andrea Iannelli, Roy S. Smith, \IEEEmembership{Fellow, IEEE}
\thanks{This work was supported by the NCCR Automation Grant 180545 funded by the Swiss National Science Foundation.}
\thanks{Amber Srivastava, Mingzhou Yin, and Roy S. Smith are with the Automatic Control Laboratory, Swiss Federal Institute of Technology (ETH Z\"urich), 8092 Zurich, Switzerland {\tt\small(e-mail:
asrivastava/myin/rsmith@control.ee.ethz.ch)}.}%
\thanks{Andrea Iannelli is with the University of Stuttgart, Institute for Systems Theory and Automatic Control, 70569 Stuttgart, Germany {\tt\small andrea.iannelli@ist.uni-stuttgart.de}}%
}

\begin{document}
\maketitle
\thispagestyle{empty}
\pagestyle{empty}

\begin{abstract}
This work presents a dual system-level parameterization (D-SLP) method for closed-loop system identification. The recent system-level synthesis framework parameterizes all stabilizing controllers via linear constraints on closed-loop response functions, known as system-level parameters. It was demonstrated that several structural, locality, and communication constraints on the controller can be posed as convex constraints on these system-level parameters. In the current work, the identification problem is treated as a {\em dual} of the system-level synthesis problem. The plant model is identified from the dual system-level parameters associated to the plant. In comparison to existing closed-loop identification approaches (such as the dual-Youla parameterization), the D-SLP framework neither requires the knowledge of a nominal plant that is stabilized by the known controller, nor depends upon the choice of factorization of the nominal plant and the stabilizing controller. Numerical simulations demonstrate the efficacy of the proposed D-SLP method in terms of identification errors, compared to existing closed-loop identification techniques. 
\end{abstract}

\section{Introduction}\label{sec: Introduction}

System identification estimates plant models from observed input-output data \cite{LjungBook2}. The majority of system identification research assumes that the data are generated in open loop, i.e., the inputs are independently selected. However, in many applications, open-loop data collection is not possible due to instability or operational constraints \cite{Gustavsson_1977}. Although open-loop methods can be directly applied to closed-loop data (known as direct identification), the performance is often not satisfactory due to the correlation between the input and the disturbance/noise \cite{VANDENHOF1998173}. Furthermore, the identified models are not guaranteed to be stabilized by the closed-loop controller, which makes the model practically unusable.

Therefore, specific algorithms for closed-loop identification are required. Instead of directly identifying the plant, these methods typically first identify some parametrizations of the plant and recover the plant model indirectly. The classical idea for such indirect identification is to first estimate the transfer function from the reference/ external signals $r_1[t],r_2[t]$ to the output $y[t]$ represented in the Figure \ref{fig:sysID_Rep}, from which the plant model $\mathbf{G}$ is calculated algebraically \cite{van1993indirect}. When the controller is unknown, this method is extended to the input-output method where the transfer function from the reference to the input $u[t]$ is also identified \cite[Section~13.5]{LjungBook2}. A more general approach, known as the coprime factor identification \cite{schrama1991open,Van_den_Hof_1995}, solves two open-loop problems with a filtered reference, where the filter is designed using a coprime factorization of an initial plant estimate.

\begin{figure}
 \centering	
 \begin{tikzpicture}[auto,>=latex']
    \node [input, name=Dr1] {};
    \node [sum, left of=Dr1,scale=0.4] (sum1) {$\text{\Huge-}$};
    \node [block, left of=sum1, node distance=1.4cm] (K) {$\mathbf{K}$};
    \node [sum, left of=K,node distance=1.5cm,scale=0.4] (sum2) {$\text{\Huge +}$};
    \node [input, above of=sum2,node distance=1.2cm] (Dr2) {};
    \node [block, left of=sum2,node distance=1.5cm] (G) {$\mathbf{G}$};
    \node [sum, left of=G,node distance=1.4cm,scale=0.4] (sum3) {$\text{\Huge +}$};
    \node [block, above of=sum3,node distance=1cm] (S) {$\mathbf{S}$};
    \node [input, above of=S,node distance=1.4cm] (De) {};
    \node [input, above of=S, name=De] {};
    \node [input, left of=sum3,node distance=0.8cm] (T1) {};
    \node [input, left of=T1,node distance=0.8cm] (T1_1) {};
    \node [input, below of=T1,node distance=1.4cm] (T2) {};
    \node [input, below of=sum1,node distance=1.4cm] (T3) {};
 
    \draw [->,anchor=south] (Dr1) -- node {$r_1[t]$} (sum1);
    \draw [->,anchor=south] (sum1) -- node {$y[t]$} (K);
    \draw [->,anchor=north] (K) -- node {$u[t]$} (sum2);
    \draw [->,anchor=south] (sum2) -- node {$\bar{u}[t]$} (G);
    \draw [->] (Dr2) -- node{$r_2[t]$} (sum2);
    \draw [->] (sum2) -- (G);
    \draw [->] (G) -- node{$\bar{y}[t]$} (sum3);
    \draw [->] (S) -- (sum3);
    \draw [->] (De) -- node {$e(k)$} (S);
    \draw [-] (sum3) -- (T1);
    \draw [->] (T1) -- node{$\hat{y}[t]$} (T1_1);
    \draw[-] (T1) -- (T2);
    \draw[-] (T2) -- (T3);
    \draw[->,anchor= south west] (T3) -- node{$+$} (sum1);
\end{tikzpicture}
     \caption[something]{Closed-loop configuration for identification.\footnotemark}
    \label{fig:sysID_Rep}
\end{figure}
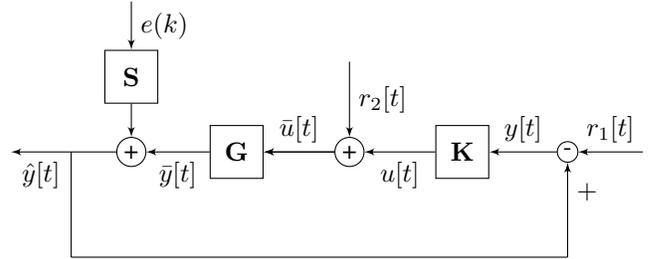
\footnotetext{Without loss of generality, we consider the configuration in which the input to the controller is $\hat{y}[t]-r_1[t]$ and not the traditional $r_1[t]-\hat{y}[t]$. This is for notational convenience in the later sections.}

To guarantee the closed-loop stability of the identified plant, the dual-Youla method is proposed \cite{hansen1988fractional,4790411}. This method originates from the Youla parametrization which parameterizes all the stabilizing controllers for control design \cite[Section~4.8]{skogestad2005multivariable}. The dual parametrization given by reversing the role of the controller and the plant, provides a parametrization of all plants stabilized by a known controller. The dual-Youla parameter can then be identified by an open-loop problem with filtered input, output, and reference, where the filter depends on a coprime factorization of the controller in addition to that of an initial plant estimate (also referred to as the nominal model).

In this work, we apply a similar dual approach to the system-level synthesis framework, recently developed in \cite{wang2019system}. In this framework, the controller is parameterized in terms of closed-loop response functions, also known as the system-level parameters, which map the process and measurement disturbances to the control actions and the states. The work done in \cite{wang2019system} illustrates the benefit of such a controller parameterization in terms of incorporating structural, locality, and communication constraints, which occur naturally in the context of decentralized controller design for large-scale cyber-physical systems, as convex constraints in the synthesis problem \cite{wang2018separable}. In the current work, we present a dual system-level parameterization (D-SLP) approach to identification of the plant transfer function from closed-loop data, where the dual parameters characterize all possible transfer functions that are stabilized by a given controller. These dual parameters are learned from the available closed-loop data, based on which the plant transfer function is estimated.

A straightforward benefit of the D-SLP methodology over approaches such as the indirect or input-output identification methods is that it guarantees that the identified plant transfer function is stabilized by the known controller - similar to the dual-Youla parameterization \cite{hansen1988fractional}. In addition to that, and contrary to the dual-Youla and the co-prime factorization approaches, the D-SLP identification scheme is independent of the choice of the nominal plant and its factorization, as well as the factorization of the controller, which makes it a tuning-free approach to identification in the closed-loop setting. Our simulations demonstrate that the D-SLP approach performs better than the dual-Youla and the co-prime identification methods for different choices of the nominal plants (that are required by the latter two schemes), including a practical two-stage scheme to select the nominal plant. 

{\em Notations and problem setting:} Lower case 
(e.g., $x$) and bold lower case (e.g. $\mathbf{x}$) Latin letters denote vectors and signals, respectively. Upper case Latin letters (such as $A$) denote matrices, and bold upper case letters (for example $\mathbf{G}$) denote transfer functions. The set $\mathcal{RH}_{\infty}$ comprises of all the stable proper transfer function matrices. For the purpose of system identification, we consider the closed-loop system configuration illustrated in Figure \ref{fig:sysID_Rep}. Here, $\mathbf{G}$ is a discrete-time linear time-invariant (LTI) system (possibly unstable) that is stabilized by the LTI controller $\mathbf{K}$, and $\mathbf{S}$ is a stable filter for the independent and identically distributed noise $e[t]$. It is assumed that the measurements of the output $y[t]$ and the reference signal $r_1[t]$ and $r_2[t]$,  as well as the knowledge of the controller $\mathbf{K}$ are available.

The paper is organized as follows. Section \ref{sec: SLP} gives a brief introduction to the system-level parameterization. In Section \ref{sec: D_SLP}, we present the D-SLP of all the plants that are stabilized by a known controller. Section \ref{sec: SysID_D_SLP} develops the D-SLP methodology to identify the plant from closed-loop data, and Section \ref{sec: Simulations} compares our identification scheme with the existing closed-loop identification strategies. We conclude the paper in Section \ref{sec: Conclusions}. 

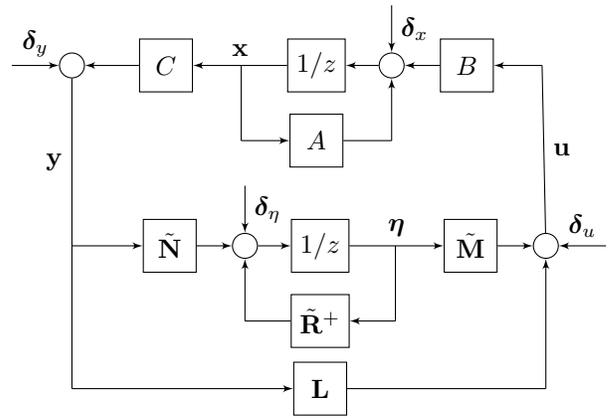
\begin{figure}
 \centering	
 \begin{tikzpicture}[auto,>=latex']
    \node [input,name=Dy] {};
    \node [sum,right of=Dy,node distance=0.8cm] (sum1) {};
    \node [block,right of=sum1,node distance=1.25cm] (C2){$C$};
    \node [input,right of=C2,node distance=1cm] (T1){};
    \node [block,right of=T1,node distance=1cm] (bYz){$1/z$};
    \node [input,below of=T1,node distance=1cm] (T1_1){};
    \node [block,right of=T1_1,node distance=1cm] (A){$A$};
    \node [input,right of=A,node distance=1cm] (T1_2){};
    \node [sum,right of=bYz,node distance=1cm] (sum2) {};
    \node [input,above of=sum2,node distance=0.8cm] (Dx) {};
    \node [block,right of=sum2,node distance=1cm] (B2) {$B$};
    \node [input,right of=B2,node distance=1cm] (T2) {};
    \draw[->] (Dy)-- node{$\bm\delta_y$} (sum1);
    \draw[->] (C2) -- (sum1);
    \draw[->,anchor=south] (bYz) -- node{$\mathbf{x}$} (C2);
    \draw[->] (sum2) -- (bYz);
    \draw[->] (Dx) -- node{$\bm\delta_x$} (sum2);
    \draw[->] (B2) -- (sum2);
    \draw[->] (T2) -- (B2);
    \draw[-] (T1) -- (T1_1);
    \draw[->] (T1_1) -- (A);
    \draw[-] (A) -- (T1_2);
    \draw[->] (T1_2) -- (sum2);
    
    \node [input,below of=sum1,node distance=2.4cm] (Dn) {};
    \node [block,right of=Dn,node distance=1.3cm] (N) {$\tilde{\mathbf{N}}$};
    \node [sum,right of=N,node distance=1cm] (sum3){};
    \node [block,right of=sum3,node distance=1cm] (bYz2){$1/z$};
    \node [input,below of=sum3,node distance=1cm] (T1_11){};
    \node [block,right of=T1_11,node distance=1cm] (R){$\tilde{\mathbf{R}}^+$};
    \node [input,right of=R,node distance=1cm] (T1_21){};
    \node [input,right of=bYz2,node distance=1cm] (T1_22) {};
    \node [block,right of=T1_22,node distance=1cm] (M) {$\tilde{\mathbf{M}}$};
    \node [sum,right of=M,node distance=1cm] (sum4) {};
    \node [input,right of=sum4,node distance=0.8cm] (Du) {};
    \node [input,above of=sum3,node distance=0.8cm] (Db) {};
    \draw[->] (Db) -- node{$\bm\delta_{\eta}$} (sum3);
    \draw[->] (Dn)-- (N);
    \draw[->] (N) -- (sum3);
    \draw[->,anchor=south] (sum3) -- (bYz2);
    \draw[->] (bYz2) -- node{$\bm\eta$} (M);
    \draw[->] (M) -- (sum4);
    \draw[->] (T1_11) -- (sum3);
    \draw[-] (R) -- (T1_11);
    \draw[->] (T1_21) -- (R);
    \draw[-] (T1_22) -- (T1_21);
    \draw[->,anchor=south] (Du) -- node{$\bm\delta_u$} (sum4);
    \draw[-,anchor=west] (sum4) -- node{$\mathbf{u}$} (T2);
    \draw[-,anchor=east] (sum1) -- node{$\mathbf{y}$} (Dn);

    \node [input,below of=Dn,node distance=1.9cm] (T3) {};
    \node [block,right of=T3,node distance=3.3cm] (L) {$\mathbf{L}$};
    \node [input,right of=L,node distance=3cm] (T4) {};
    \draw[-] (Dn) -- (T3);
    \draw[->] (T3) -- (L);
    \draw[-] (L) -- (T4);
    \draw[->] (T4) -- (sum4);
        
\end{tikzpicture}
     \caption{Output feedback controller with $\tilde{\mathbf{R}}^+=z(I-z\mathbf{R})$, $\tilde{\mathbf{M}} = z\mathbf{M}$, and $\tilde{\mathbf{N}}=z\mathbf{N}$. Note that $\tilde{\mathbf{R}}^+,\tilde{\mathbf{M}},\tilde{\mathbf{N}}\in\mathcal{RH}_{\infty}$.}
    \label{fig:SyS1}
\end{figure}

\section{System-Level Parameterization}\label{sec: SLP}
In this section, we briefly review the system-level parameterization (SLP) \cite{wang2019system} developed for output feedback control design, as it forms the basis of the proposed identification approach. Let $\mathbf{P} = [\mathbf{P}_{1}~~\mathbf{P}_2]$ be an open loop plant, such that its output is given by
\begin{align}\label{eq: OutputFeedback}
\mathbf{y} = \mathbf{P}_1\mathbf{w} + \mathbf{P}_2\mathbf{u},
\end{align}
where the transfer function $\mathbf{P}_1$ filters the exogenous signals $\mathbf{w}$, and $\mathbf{P}_2$ filters the control input $\mathbf{u}$ (as in Figure \ref{fig:sysID_Rep}). Drawing parallels with the configuration illustrated in Figure \ref{fig:sysID_Rep}, we note that $\mathbf{w}^\top=[\mathbf{e}^\top~~\mathbf{r}_1^\top~~\mathbf{r}_2^\top]$, 
$\mathbf{P}_1 = [\mathbf{S}~~-I~~\mathbf{G}]$, $\mathbf{u}$ is the output from the controller $\mathbf{K}$, and $\mathbf{P}_2=\mathbf{G}$. Under the assumption that $\mathbf{P}_2$ is a strictly proper transfer function, the dynamics of the plant $\mathbf{P}$ are described as follows
\begin{subequations}\label{eq: dynamicsP}
\begin{align}
x[t+1] &= Ax[t]+Bu[t]+B_1w[t],\\
y[t] &= Cx[t] + D_1w[t],
\end{align}
\end{subequations}
where $x,u,w$, and $y$ denote the state, control input, exogenous input, and output vectors, respectively. Let $\delta_x[t]:=B_1w[t]$ be the disturbance to the plant state, and $\delta_y[t]:=D_1w[t]$ be the disturbance to the plant output. Taking the $z$-transform of \eqref{eq: dynamicsP}, and substituting the output feedback law $\mathbf{u}=\mathbf{Ky}$, we obtain the following closed-loop system responses from disturbance pair $({\bm\delta}_x,{\bm\delta}_y)$ to the state and control input signal pair $(\mathbf{x},\mathbf{u})$ as
\begin{align}\label{eq: Cl_SR}
\begin{bmatrix}
\mathbf{x}\\
\mathbf{u}
\end{bmatrix}=
\begin{bmatrix}
\mathbf{R} & \mathbf{N}\\
\mathbf{M} & \mathbf{L}
\end{bmatrix}
\begin{bmatrix}
{\bm\delta}_x\\
{\bm\delta}_y
\end{bmatrix}.
\end{align}

The following theorem from \cite{wang2019system} algebraically characterizes the set $\{\mathbf{R,M,N,L}\}$ of system responses that are achieved by the internally stabilizing controller $\mathbf{K}$, as well as parameterizes a internally stabilizing controller $\mathbf{K}$ in terms of the system responses.

\begin{Theorem}\label{thm: main}
Consider the system \eqref{eq: OutputFeedback} with output feedback $\mathbf{u} = \mathbf{Ky}$. The following statements are true.
\begin{itemize}
\item[1)] The closed-loop system responses \eqref{eq: Cl_SR} from $(\bm\delta_x,\bm\delta_y)$ to $(\mathbf{x},\mathbf{u})$ lie in the following affine subspace
\begin{subequations}\label{eq: AffSubSp}
\begin{align}
\begin{bmatrix}
zI-A & -B
\end{bmatrix}
\begin{bmatrix}
\mathbf{R} & \mathbf{N}\\
\mathbf{M} & \mathbf{L}
\end{bmatrix}&=\begin{bmatrix}
I & 0
\end{bmatrix},\\
\begin{bmatrix}
\mathbf{R} & \mathbf{N}\\
\mathbf{M} & \mathbf{L}
\end{bmatrix}\begin{bmatrix}
zI-A\\
-C
\end{bmatrix}&=
\begin{bmatrix}
I\\
0
\end{bmatrix}.\\
\mathbf{R,M,N}\in\frac{1}{z}\mathcal{RH}_{\infty}&,\ \mathbf{L}\in\mathcal{RH}_{\infty},
\end{align}
\end{subequations}
\item[2)] For the set of transfer function matrices $\{\mathbf{R,M,N,L}\}$ that lie in the affine subspace \eqref{eq: AffSubSp}, the controller $\mathbf{K=L-MR^{-1}N}$ is internally stabilizing, and can be implemented as in Figure \ref{fig:SyS1}.
\end{itemize}
\end{Theorem}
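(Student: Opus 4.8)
The plan is to treat the two statements separately, both by direct transfer-function algebra in the $z$-domain.

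\emph{Statement 1 (necessity).} I would start by $z$-transforming \eqref{eq: dynamicsP} to obtain $(zI-A)\mathbf{x} = B\mathbf{u} + \bm\delta_x$ and $\mathbf{y} = C\mathbf{x} + \bm\delta_y$, and then substitute the feedback law $\mathbf{u} = \mathbf{K}\mathbf{y}$. Eliminating $\mathbf{y}$ yields $(zI - A - B\mathbf{K}C)\mathbf{x} = \bm\delta_x + B\mathbf{K}\bm\delta_y$, so (by well-posedness of the loop) the maps in \eqref{eq: Cl_SR} are identified explicitly as $\mathbf{R} = (zI - A - B\mathbf{K}C)^{-1}$, $\mathbf{N} = \mathbf{R}B\mathbf{K}$, $\mathbf{M} = \mathbf{K}C\mathbf{R}$, and $\mathbf{L} = \mathbf{K} + \mathbf{K}C\mathbf{R}B\mathbf{K}$. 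Substituting these into the two products $\begin{bmatrix}zI-A & -B\end{bmatrix}\begin{bmatrix}\mathbf{R}&\mathbf{N}\\\mathbf{M}&\mathbf{L}\end{bmatrix}$ and $\begin{bmatrix}\mathbf{R}&\mathbf{N}\\\mathbf{M}&\mathbf{L}\end{bmatrix}\begin{bmatrix}zI-A\\-C\end{bmatrix}$ and simplifying with the push-through identity $(zI-A)(zI-A-B\mathbf{K}C)^{-1} = I + B\mathbf{K}C(zI-A-B\mathbf{K}C)^{-1}$ together with its mirror image collapses each product to $\begin{bmatrix}I & 0\end{bmatrix}$ and $\begin{bmatrix}I\\0\end{bmatrix}$, respectively. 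The stability memberships then follow from internal stability of the interconnection together with strict properness of $\mathbf{P}_2 = C(zI-A)^{-1}B$: since $\mathbf{K}$ is proper, $zI-A-B\mathbf{K}C$ has a proper inverse with a $z^{-1}$ roll-off, so $\mathbf{R},\mathbf{M},\mathbf{N}\in\frac{1}{z}\mathcal{RH}_\infty$, whereas $\mathbf{L}$ only inherits properness and hence lies in $\mathcal{RH}_\infty$.

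\emph{Statement 2 (sufficiency).} Conversely, given $\{\mathbf{R},\mathbf{M},\mathbf{N},\mathbf{L}\}$ satisfying \eqref{eq: AffSubSp}, I would first observe that $\mathbf{R}$ is invertible: the constraint $\mathbf{R}(zI-A) - \mathbf{N}C = I$ gives $\mathbf{R} = (I+\mathbf{N}C)(zI-A)^{-1}$, and $I + \mathbf{N}C$ is invertible since $\mathbf{N}C$ is strictly proper, so $\mathbf{K} = \mathbf{L}-\mathbf{M}\mathbf{R}^{-1}\mathbf{N}$ is well defined. Next I would check that the realization in Figure \ref{fig:SyS1} actually implements $\mathbf{u} = \mathbf{K}\mathbf{y}$: writing its state equation as $z\bm\eta = \tilde{\mathbf{N}}\mathbf{y} + \tilde{\mathbf{R}}^+\bm\eta$ with output $\mathbf{u} = \tilde{\mathbf{M}}\bm\eta + \mathbf{L}\mathbf{y}$ and inserting $\tilde{\mathbf{R}}^+ = z(I-z\mathbf{R})$, $\tilde{\mathbf{M}} = z\mathbf{M}$, $\tilde{\mathbf{N}} = z\mathbf{N}$, the state equation collapses to $z\mathbf{R}\bm\eta = \mathbf{N}\mathbf{y}$, whence $\mathbf{u} = (\mathbf{L}-\mathbf{M}\mathbf{R}^{-1}\mathbf{N})\mathbf{y}$ (with the sign convention of the figure). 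That $\tilde{\mathbf{R}}^+,\tilde{\mathbf{M}},\tilde{\mathbf{N}}\in\mathcal{RH}_\infty$ is immediate from $\mathbf{R},\mathbf{M},\mathbf{N}\in\frac{1}{z}\mathcal{RH}_\infty$, so this realization is a legitimate stable system.

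\emph{Internal stability — the crux.} The main work, and the step I expect to be the obstacle, is showing that closing this controller around $\mathbf{P}$ produces an internally stable loop. I would inject test disturbances at every internal signal ($\bm\delta_x,\bm\delta_y$ at the plant and $\bm\delta_\eta$ at the controller state) and compute all closed-loop transfer matrices among $(\mathbf{x},\mathbf{u},\bm\eta)$. Using the affine equations \eqref{eq: AffSubSp} repeatedly, the block from $(\bm\delta_x,\bm\delta_y)$ to $(\mathbf{x},\mathbf{u})$ collapses to exactly $\begin{bmatrix}\mathbf{R}&\mathbf{N}\\\mathbf{M}&\mathbf{L}\end{bmatrix}$, which is stable by hypothesis, while the maps involving $\bm\delta_\eta$ and $\bm\eta$ reduce, after cancellations, to products such as $z^{-1}\mathbf{R}^{-1}\mathbf{N}$, $\mathbf{M}\mathbf{R}^{-1}$, and $z^{-1}\mathbf{R}^{-1}$; the content of the argument is that although $\mathbf{R}^{-1}$ is itself improper, each product that actually appears is proper and stable by virtue of \eqref{eq: AffSubSp}. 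This is precisely why the theorem prescribes the particular realization of Figure \ref{fig:SyS1}, assembled from the shifted stable factors $\tilde{\mathbf{R}}^+,\tilde{\mathbf{M}},\tilde{\mathbf{N}}$, rather than a naive state-space realization of $\mathbf{L}-\mathbf{M}\mathbf{R}^{-1}\mathbf{N}$, which could hide unstable pole--zero cancellations; ruling those out is where the affine constraints do the real work.
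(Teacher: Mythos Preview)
The paper does not prove Theorem~\ref{thm: main}; it is quoted verbatim from \cite{wang2019system} and used as background for the dual construction. So there is no in-paper proof to compare against. Your sketch is essentially the standard argument that appears in the cited reference: explicit closed-loop formulas $\mathbf{R}=(zI-A-B\mathbf{K}C)^{-1}$, $\mathbf{N}=\mathbf{R}B\mathbf{K}$, $\mathbf{M}=\mathbf{K}C\mathbf{R}$, $\mathbf{L}=\mathbf{K}+\mathbf{K}C\mathbf{R}B\mathbf{K}$ for part~1), followed by a direct verification that the structure of Figure~\ref{fig:SyS1} realizes $\mathbf{K}$ and that closing the loop reproduces the prescribed $\{\mathbf{R},\mathbf{M},\mathbf{N},\mathbf{L}\}$ as the disturbance-to-signal maps for part~2).

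Two points worth tightening. First, your derivation of the map $\mathbf{y}\mapsto\mathbf{u}$ from the figure actually yields $\mathbf{L}+\mathbf{M}\mathbf{R}^{-1}\mathbf{N}$ with the sign conventions as drawn; the identity $\mathbf{K}=\mathbf{L}-\mathbf{M}\mathbf{R}^{-1}\mathbf{N}$, on the other hand, follows unambiguously from your own formulas in part~1) (since $\mathbf{M}\mathbf{R}^{-1}\mathbf{N}=\mathbf{K}C\mathbf{R}B\mathbf{K}$), so the discrepancy is in the figure's signs, not in your algebra---but ``with the sign convention of the figure'' hides this rather than resolves it. Second, in the internal-stability step you assert that the $\bm\delta_\eta$-related maps reduce to objects like $z^{-1}\mathbf{R}^{-1}$ that are ``proper and stable by virtue of \eqref{eq: AffSubSp}''; this is the right claim, but stability of $(z\mathbf{R})^{-1}=(I+\mathbf{N}C)^{-1}(I-z^{-1}A)^{-1}\cdot z\cdots$ is \emph{not} immediate from strict properness of $\mathbf{N}C$ alone. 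The argument in \cite{wang2019system} avoids inverting $\mathbf{R}$ at this stage and instead expresses every closed-loop map from $(\bm\delta_x,\bm\delta_y,\bm\delta_\eta,\bm\delta_u)$ as an affine combination of $\mathbf{R},\mathbf{M},\mathbf{N},\mathbf{L}$ themselves (using both affine identities in \eqref{eq: AffSubSp} simultaneously), which makes stability automatic. Your sketch points at the right mechanism but stops one computation short of it.
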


\begin{figure}
 \centering	
 \begin{tikzpicture}[auto,>=latex',scale=0.8]
    \node [input,name=Dy] {};
    \node [sum,right of=Dy,node distance=0.8cm] (sum1) {};
    \node [input,above of=sum1,node distance=0.8cm] (Hy) {};
    \node [sum,below of=sum1,node distance=0.8cm] (Dn) {};
    \node [block,right of=Dn,node distance=1.3cm] (N) {$\tilde{\mathbf{N}}$};
    \node [sum,right of=N,node distance=1cm] (sum3){};
    \node [block,right of=sum3,node distance=1cm] (bYz2){$1/z$};
    \node [input,below of=sum3,node distance=1cm] (T1_11){};
    \node [block,right of=T1_11,node distance=1cm] (R){$\tilde{\mathbf{R}}^+$};
    \node [input,right of=R,node distance=1cm] (T1_21){};
    \node [input,right of=bYz2,node distance=1cm] (T1_22) {};
    \node [block,right of=T1_22,node distance=1cm] (M) {$\tilde{\mathbf{M}}$};
    \node [sum,right of=M,node distance=1cm] (sum4) {};
    \node [input,right of=sum4,node distance=0.8cm] (Du) {};
    \node [input,below of=Du,node distance=2.8cm] (Du1) {};
    \node [input,left of=Dn,node distance=0.8cm] (Du2) {};
    \node [input,below of=Du2,node distance=2.8cm] (Du3) {};
    \node [block,left of=Du1,node distance=3.8cm] (minD22) {$-D$};
    \node [input,above of=sum3,node distance=0.8cm] (Db) {};
    \node [sum,above of=sum4,node distance=0.8cm] (sum5) {};
    \node [input,right of=sum5,node distance=0.8cm] (DelU) {};
    \node [input,above of=sum5,node distance=0.8cm] (TTyy) {};
    \draw[->] (Dy) -- node{$\bm\delta_y$} (sum1);
    \draw[->] (Hy) -- (sum1);
    \draw[->] (sum1) -- (Dn);
    \draw[->] (Db) -- node{$\bm\delta_{\eta}$} (sum3);
    \draw[->] (Dn)-- (N);
    \draw[->] (N) -- (sum3);
    \draw[->,anchor=south] (sum3) -- (bYz2);
    \draw[->] (bYz2) -- node{$\bm\eta$} (M);
    \draw[->] (M) -- (sum4);
    \draw[->] (T1_11) -- (sum3);
    \draw[-] (R) -- (T1_11);
    \draw[->] (T1_21) -- (R);
    \draw[-] (T1_22) -- (T1_21);
    \draw[-] (sum4) -- (Du);
    \draw[-] (Du) -- (Du1);
    \draw[-] (Du1) -- (minD22);
    \draw[-] (minD22) -- (Du3);
    \draw[-] (Du3) -- (Du2);
    \draw[->] (Du2) -- (Dn);
    \draw[->] (sum4) -- (sum5);
    \draw[->,anchor=west] (sum5) -- node{$\mathbf{u}$} (TTyy);
    \draw[->] (DelU) -- node{$\bm\delta_u$} (sum5);
    \draw[-,anchor=east] (sum1) -- node{$\mathbf{y}$} (Dn);

    \node [input,below of=Dn,node distance=1.9cm] (T3) {};
    \node [block,right of=T3,node distance=3.3cm] (L) {$\mathbf{L}$};
    \node [input,right of=L,node distance=3cm] (T4) {};
    \draw[-] (Dn) -- (T3);
    \draw[->] (T3) -- (L);
    \draw[-] (L) -- (T4);
    \draw[->] (T4) -- (sum4);
        
\end{tikzpicture}
     \caption{Output feedback controller structure}
    \label{fig:SyS1_ProperPlant}
\end{figure}

{\em Proper plant $\mathbf{G}$:} The extension to the case of proper $\mathbf{G}$ is straightforward (see \cite[Section~III-D]{wang2019system}). In this case, the dynamics of the open-loop plant $\mathbf{P}$ are given by
\begin{subequations}
\begin{align}
x[t+1] &= Ax[t]+Bu[t]+B_1w[t],\\
y[t] &= Cx[t] + Du[t] + D_1w[t].
\end{align}
\end{subequations}
Define a new measurement $\hat{y}[t]=y[t]-Du[t]$ for this case, which results in the controller structure illustrated in Figure \ref{fig:SyS1_ProperPlant}, and internally stabilizes the plant $\mathbf{G}$. In other words, the controller that internally stabilizes a proper plant $\mathbf{G}$ is given by
\begin{align}\label{eq: K_PrpG}
\begin{split}
&\mathbf{K} = \check{\mathbf{K}}\big(I+D\check{\mathbf{K}}\big)^{-1},\\ &\text{where }\check{\mathbf{K}} = (\mathbf{L}-\mathbf{M}\mathbf{R}^{-1}\mathbf{N}).
\end{split}
\end{align}

In the following section, we build upon the above parameterization of a stabilizing controller, and present a D-SLP for all the plants $\mathbf{G}$ that are stabilized by a given output-feedback controller $\mathbf{K}$.

\section{Dual System-Level Parameterization}\label{sec: D_SLP}
Consider the open-loop system $\mathbf{Q} = [\mathbf{Q}_1~~\mathbf{Q}_2]$ given by
\begin{align}\label{eq: dualPlant}
\bar{\mathbf{u}} = \mathbf{Q}_1\mathbf{w} + \mathbf{Q}_2\bar{\mathbf{y}},
\end{align}
where $\mathbf{Q}_1$ filters the exogenous signal $\mathbf{w}$, and $\mathbf{Q}_2$ filters $\bar{\mathbf{y}}$. Drawing analogies with Figure \ref{fig:sysID_Rep}, $\bar{\mathbf{u}}$ and $\bar{\mathbf{y}}$ are as illustrated in the figure, $\mathbf{w}^\top=[\mathbf{e}^\top~~\mathbf{r}_1^\top~~\mathbf{r}_2^\top]$, $\mathbf{Q}_1=[\mathbf{KS}~~-\mathbf{K}~~-I]$, and $\mathbf{Q}_2=\mathbf{K}$. Analogous to the case in Section \ref{sec: SLP}, we begin by considering a strictly proper $\mathbf{Q}_2$ (or, strictly proper $\mathbf{K}$). The dynamics of the open-loop system $\mathbf{Q}$ under this assumption are given by
\begin{subequations}\label{eq: CtrlStSpace}
\begin{align}
\xi[t+1] &= A_k \xi[t] + B_k \bar{y}[t] + B_{1k}w[t],\\
\bar{u}[t] &= C_k\xi[t] + D_{1k}w[t].
\end{align}
\end{subequations}
Let $\delta_{\xi}[t]:=B_{1k}w[t]$ denote the disturbance to the state $\xi$, and $\delta_{\bar{u}}[t]=D_{1k}w[t]$ be the disturbance to the output $\bar{u}$. For the plant model $\bar{\mathbf{y}}=\mathbf{G}\bar{\mathbf{u}}$, the closed-loop dual system responses (or the dual system-level parameters) are defined as the transfer functions $\{\mathbf{R_k,M_k,N_k,L_k}\}$ that map the disturbance pair $({\bm\delta}_{\xi},{\bm\delta}_{\bar{u}})$ to the state $\bm\xi$ and $\bar{\mathbf{y}}$ as follows
\begin{align}\label{eq: dualSLP_Def}
\begin{bmatrix}
\bm\xi\\
{\mathbf{\bar{y}}}
\end{bmatrix}=
\begin{bmatrix}
\mathbf{R}_k & \mathbf{N}_k\\
\mathbf{M}_k & \mathbf{L}_k
\end{bmatrix}
\begin{bmatrix}
{\bm\delta}_{\xi}\\
{\bm\delta}_{\bar{u}}
\end{bmatrix}.
\end{align}

\begin{figure}
 \centering	
 \begin{tikzpicture}[auto,>=latex']
    \node [input,name=Dy] {};
    \node [sum,right of=Dy,node distance=0.8cm] (sum1) {};
    \node [block,right of=sum1,node distance=1.25cm] (C2){$C_k$};
    \node [input,right of=C2,node distance=1cm] (T1){};
    \node [block,right of=T1,node distance=1cm] (bYz){$1/z$};
    \node [input,below of=T1,node distance=1cm] (T1_1){};
    \node [block,right of=T1_1,node distance=1cm] (A){$A_k$};
    \node [input,right of=A,node distance=1cm] (T1_2){};
    \node [sum,right of=bYz,node distance=1cm] (sum2) {};
    \node [input,above of=sum2,node distance=0.8cm] (Dx) {};
    \node [block,right of=sum2,node distance=1cm] (B2) {$B_k$};
    \node [input,right of=B2,node distance=1cm] (T2) {};
    \draw[->] (Dy)-- node{$\bm\delta_{\bar{u}}$} (sum1);
    \draw[->] (C2) -- (sum1);
    \draw[->,anchor=south] (bYz) -- node{$\bm\xi$} (C2);
    \draw[->] (sum2) -- (bYz);
    \draw[->] (Dx) -- node{$\bm\delta_{\xi}$} (sum2);
    \draw[->] (B2) -- (sum2);
    \draw[->] (T2) -- (B2);
    \draw[-] (T1) -- (T1_1);
    \draw[->] (T1_1) -- (A);
    \draw[-] (A) -- (T1_2);
    \draw[->] (T1_2) -- (sum2);
    
    \node [input,below of=sum1,node distance=2.4cm] (Dn) {};
    \node [block,right of=Dn,node distance=1.3cm] (N) {$\tilde{\mathbf{N}}_k$};
    \node [sum,right of=N,node distance=1cm] (sum3){};
    \node [block,right of=sum3,node distance=1cm] (bYz2){$1/z$};
    \node [input,below of=sum3,node distance=1cm] (T1_11){};
    \node [block,right of=T1_11,node distance=1cm] (R){$\tilde{\mathbf{R}}_k^+$};
    \node [input,right of=R,node distance=1cm] (T1_21){};
    \node [input,right of=bYz2,node distance=1cm] (T1_22) {};
    \node [block,right of=T1_22,node distance=1cm] (M) {$\tilde{\mathbf{M}}_k$};
    \node [sum,right of=M,node distance=1cm] (sum4) {};
    \node [input,right of=sum4,node distance=0.8cm] (Du) {};
    \node [input,above of=sum3,node distance=0.8cm] (Db) {};
    \draw[->] (Db) -- node{$\bm\delta_{\zeta}$} (sum3);
    \draw[->] (Dn)-- (N);
    \draw[->] (N) -- (sum3);
    \draw[->,anchor=south] (sum3) -- (bYz2);
    \draw[->] (bYz2) -- node{$\bm\zeta$} (M);
    \draw[->] (M) -- (sum4);
    \draw[->] (T1_11) -- (sum3);
    \draw[-] (R) -- (T1_11);
    \draw[->] (T1_21) -- (R);
    \draw[-] (T1_22) -- (T1_21);
    \draw[->,anchor=south] (Du) -- node{$\bm\delta_{\bar{y}}$} (sum4);
    \draw[-,anchor=west] (sum4) -- node{$\bar{\mathbf{y}}$} (T2);
    \draw[-,anchor=east] (sum1) -- node{$\bar{\mathbf{u}}$} (Dn);

    \node [input,below of=Dn,node distance=1.9cm] (T3) {};
    \node [block,right of=T3,node distance=3.3cm] (L) {$\mathbf{L}_k$};
    \node [input,right of=L,node distance=3cm] (T4) {};
    \draw[-] (Dn) -- (T3);
    \draw[->] (T3) -- (L);
    \draw[-] (L) -- (T4);
    \draw[->] (T4) -- (sum4);
\end{tikzpicture}
     \caption{Dual closed-loop structure with $\tilde{\mathbf{R}}_k^+=z(I-z\mathbf{R}_k)$, $\tilde{\mathbf{M}}_k = z\mathbf{M}_k$, and $\tilde{\mathbf{N}}_k=z\mathbf{N}_k$. Note that $\tilde{\mathbf{R}}_k^+,\tilde{\mathbf{M}}_k,\tilde{\mathbf{N}}_k\in\mathcal{RH}_{\infty}$.}
    \label{fig:SyS2}
\end{figure}

The following corollary directly follows from Theorem \ref{thm: main}. It algebraically characterizes the dual parameters $\{\mathbf{R_k,M_k,N_k,L_k}\}$ in \eqref{eq: dualSLP_Def}, as well as parameterizes the set of plants $\mathbf{G}$ that are stabilized by the controller $\mathbf{K}$.
\begin{Corollary}
Consider the dynamics \eqref{eq: CtrlStSpace} with the plant model $\bar{\mathbf{y}} = \mathbf{G}\bar{\mathbf{u}}$. Then, the closed-loop dual system-level parameters $\{\mathbf{R}_k,\mathbf{M}_k,\mathbf{N}_k,\mathbf{L}_k\}$ in \eqref{eq: dualSLP_Def} lie in the affine subspace
\begin{subequations}\label{eq: AffSubDual}
\begin{align}
\begin{bmatrix}
zI-A_k & -B_k
\end{bmatrix}
\begin{bmatrix}
\mathbf{R}_k & \mathbf{N}_k\\
\mathbf{M}_k & \mathbf{L}_k
\end{bmatrix}&=\begin{bmatrix}
I & 0
\end{bmatrix},\\
\begin{bmatrix}
\mathbf{R}_k & \mathbf{N}_k\\
\mathbf{M}_k & \mathbf{L}_k
\end{bmatrix}\begin{bmatrix}
zI-A_k\\
-C_k
\end{bmatrix}&=
\begin{bmatrix}
I\\
0
\end{bmatrix},\\
\mathbf{R}_k,\mathbf{M}_k,\mathbf{N}_k\in\frac{1}{z}\mathcal{RH}_{\infty}&,\ \mathbf{L}_k\in\mathcal{RH}_{\infty},
\end{align}
\end{subequations}
and the plant $\hat{\mathbf{G}}=\mathbf{L}_k-\mathbf{M}_k\mathbf{R}_k^{-1}\mathbf{N}_k$ is internally stabilized by the output feedback controller $\mathbf{K}$.
\label{coro:1}
\end{Corollary}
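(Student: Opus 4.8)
\emph{Proof proposal.} The plan is to obtain the statement directly from Theorem~\ref{thm: main}, applied to the dual plant $\mathbf{Q}$ of \eqref{eq: dualPlant} with the roles of ``plant'' and ``controller'' interchanged relative to Section~\ref{sec: SLP}. The key is the dictionary $(A_k,B_k,C_k)\leftrightarrow(A,B,C)$, $\mathbf{G}\leftrightarrow\mathbf{K}$ (the feedback law), $(\bm\delta_\xi,\bm\delta_{\bar u})\leftrightarrow(\bm\delta_x,\bm\delta_y)$, and $\{\mathbf{R}_k,\ldots,\mathbf{L}_k\}\leftrightarrow\{\mathbf{R},\ldots,\mathbf{L}\}$. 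Under this identification, \eqref{eq: CtrlStSpace} closed with $\bar{\mathbf{y}}=\mathbf{G}\bar{\mathbf{u}}$ is formally identical to \eqref{eq: dynamicsP} closed with $\mathbf{u}=\mathbf{Ky}$, and \eqref{eq: dualSLP_Def} plays the part of \eqref{eq: Cl_SR}.

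First I would establish the affine characterization. Taking the $z$-transform of \eqref{eq: CtrlStSpace} and substituting $\bar{\mathbf{y}}=\mathbf{G}\bar{\mathbf{u}}$ reproduces, symbol for symbol, the derivation that leads from \eqref{eq: dynamicsP} to \eqref{eq: AffSubSp}; hence part~1) of Theorem~\ref{thm: main}, read through the dictionary above, gives \eqref{eq: AffSubDual} verbatim. Strict properness of $\mathbf{Q}_2=\mathbf{K}$ is exactly the standing assumption behind \eqref{eq: CtrlStSpace}; if $\mathbf{K}$ is merely proper one first passes to a shifted measurement as in the ``Proper plant $\mathbf{G}$'' paragraph, and the rest is unchanged.

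Next, for any $\{\mathbf{R}_k,\mathbf{M}_k,\mathbf{N}_k,\mathbf{L}_k\}$ satisfying \eqref{eq: AffSubDual}, part~2) of Theorem~\ref{thm: main} applied to $\mathbf{Q}$ yields that $\hat{\mathbf{G}}=\mathbf{L}_k-\mathbf{M}_k\mathbf{R}_k^{-1}\mathbf{N}_k$ — now in the slot occupied by the controller in Section~\ref{sec: SLP} — internally stabilizes $\mathbf{Q}$, realizes the maps \eqref{eq: dualSLP_Def}, and can be implemented as in Figure~\ref{fig:SyS2}. The remaining step, which I expect to be the crux, is to convert ``$\hat{\mathbf{G}}$ internally stabilizes $\mathbf{Q}$'' into ``$\mathbf{K}$ internally stabilizes $\hat{\mathbf{G}}$''. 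I would argue this by writing the loop equations of the dual interconnection explicitly: since $\mathbf{Q}_2=\mathbf{K}$ and the dual feedback is $\bar{\mathbf{y}}=\hat{\mathbf{G}}\bar{\mathbf{u}}$, these are precisely the loop equations of the standard feedback interconnection of the plant $\hat{\mathbf{G}}$ with the controller $\mathbf{K}$ in Figure~\ref{fig:sysID_Rep}, with $(\bm\delta_\xi,\bm\delta_{\bar u},\bm\delta_{\bar y})$ corresponding, under the chosen realizations, to the standard disturbance-injection points of that loop. Internal stability is stability of every transfer function between injected and internal signals of this common set of equations, and is therefore insensitive to which block is labelled ``plant'' and which ``controller''; this yields the claim.

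The main obstacle is to make that last identification airtight: one must check that the internal signals of the dual interconnection (the state of $\mathbf{Q}$ together with that of a realization of $\hat{\mathbf{G}}$) and those of the $(\hat{\mathbf{G}},\mathbf{K})$ loop (the state of $\hat{\mathbf{G}}$ together with that of $\mathbf{K}$) are in stability-preserving correspondence — equivalently, that the exogenous filtering in $\mathbf{Q}_1=[\mathbf{KS}~-\mathbf{K}~-I]$ only injects through the stable $\mathbf{S}$ and the core feedback path $\mathbf{Q}_2=\mathbf{K}$, so nothing outside the $(\hat{\mathbf{G}},\mathbf{K})$ loop affects internal stability — and that the sign/placement conventions for $\mathbf{w}=[\mathbf{e}^\top~\mathbf{r}_1^\top~\mathbf{r}_2^\top]^\top$ implicit in $\mathbf{Q}_1$ are consistent with those in $\mathbf{P}_1=[\mathbf{S}~-I~\mathbf{G}]$. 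Everything else is a direct re-reading of Theorem~\ref{thm: main} and requires no new computation.
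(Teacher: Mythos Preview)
Your proposal is correct and matches the paper's approach: the paper simply states that the corollary ``directly follows from Theorem~\ref{thm: main}'' via the evident swap of the roles of plant and controller, which is precisely your dictionary $(A_k,B_k,C_k)\leftrightarrow(A,B,C)$, $\mathbf{G}\leftrightarrow\mathbf{K}$. Your additional care about the equivalence ``$\hat{\mathbf{G}}$ internally stabilizes $\mathbf{Q}$'' $\Leftrightarrow$ ``$\mathbf{K}$ internally stabilizes $\hat{\mathbf{G}}$'' is warranted and goes beyond what the paper spells out, but the symmetry of internal stability under relabelling of the two blocks in a feedback loop makes this immediate.
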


{\em Proper controller $\mathbf{K}$:} The extension to the case of proper controllers $\mathbf{K}$ is straightforward, where the dynamics of the open-loop system $\mathbf{Q}$ are given by
\begin{subequations}
\begin{align}
\xi[t+1] &= A_k \xi[t] + B_k \bar{y}[t] + B_{1k}w[t],\\
\bar{u}[t] &= C_k\xi[t] + D_k \bar{y}[t] + D_{1k}w[t].
\end{align}
\end{subequations}
Analogous to the case of proper plants discussed in Section \ref{sec: SLP}, the plant
\begin{align}\label{eq: EstG_PrpCtrl}
\begin{split}
&\hat{\mathbf{G}} = \check{\mathbf{G}}\big(I+D_k\check{\mathbf{G}}\big)^{-1},\\ &\text{where }\check{\mathbf{G}} = (\mathbf{L}_k-\mathbf{M}_k\mathbf{R}^{-1}_k\mathbf{N}_k),
\end{split}
\end{align}
is stabilized by the proper controller $\mathbf{K}$. 

In the next section, we illustrate the use of the above characterization in terms of the dual system-level parameters in closed-loop identification of the plant $\mathbf{G}$. 

\section{Identification in the Dual-SLS Framework}\label{sec: SysID_D_SLP}
The objective of the D-SLP developed in Section \ref{sec: D_SLP} is to aid the closed-loop identification of the transfer function $\mathbf{G}$. In the proposed identification scheme, we pose an optimization problem to identify the dual parameters $\{\mathbf{R}_k,\mathbf{M}_k,\mathbf{N}_k,\mathbf{L}_k\}$ that conform with the collected closed-loop data, as well as the affine subspace constraints in \eqref{eq: AffSubDual}. More precisely, an open-loop identification equation from the reference signal $\mathbf{r}:=\mathbf{Kr}_1+\mathbf{r}_2$ to the output signal $\mathbf{y}$ is first derived in terms of the dual parameter $\mathbf{L}_k$. Secondly, an optimization problem is posed to estimate the dual parameters $\{\hat{\mathbf{R}}_k,\hat{\mathbf{M}}_k,\hat{\mathbf{N}}_k,\hat{\mathbf{L}}_k\}$ that comply with the above open-loop equation as well as the affine subspace constraints in \eqref{eq: AffSubDual}. Finally, the plant is estimated from Corollary~\ref{coro:1} with $\hat{\mathbf{G}}=\hat{\mathbf{L}}_k-\hat{\mathbf{M}}_k\hat{\mathbf{R}}_k^{-1}\hat{\mathbf{N}}_k$. The following lemma is required to derive the mapping from the reference signal $\mathbf{r}:=\mathbf{Kr}_1+\mathbf{r}_2$ to the output $\mathbf{y}$ in terms of the dual system response $\mathbf{L}_k$.

\begin{Lemma}
For the closed-loop representation illustrated in Figure \ref{fig:sysID_Rep}, where $\bar{\mathbf{y}}=\mathbf{G\bar{u}}$, the dual system response $\mathbf{L}_k=(I-\mathbf{GK})^{-1}\mathbf{G}$.
\end{Lemma}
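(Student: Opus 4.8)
The claim is that the dual system-level parameter $\mathbf{L}_k$, which by definition \eqref{eq: dualSLP_Def} is the transfer function from the output disturbance $\bm\delta_{\bar u}$ to $\bar{\mathbf{y}}$ in the dual closed loop (Figure \ref{fig:SyS2}), equals $(I-\mathbf{GK})^{-1}\mathbf{G}$. The natural approach is to work directly from the interconnection equations rather than from the state-space realization: the dual ``plant'' $\mathbf{Q}$ of \eqref{eq: dualPlant} has $\bar{\mathbf u}=\mathbf{Q}_1\mathbf{w}+\mathbf{K}\bar{\mathbf y}$, and closing the loop with $\bar{\mathbf y}=\mathbf{G}\bar{\mathbf u}$ plays the role that ``$\mathbf u=\mathbf{Ky}$'' played in Theorem \ref{thm: main}. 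So I would first identify, in the dual state-space model \eqref{eq: CtrlStSpace}, exactly how the disturbance $\bm\delta_{\bar u}=D_{1k}\mathbf{w}$ enters: it is the additive perturbation on the output equation $\bar u[t]=C_k\xi[t]+D_{1k}w[t]$, hence $\bar{\mathbf u}=C_k(zI-A_k)^{-1}B_k\bar{\mathbf y}+\bm\delta_{\bar u}=\mathbf{K}\bar{\mathbf y}+\bm\delta_{\bar u}$ when we momentarily set $\bm\delta_\xi=0$ (it does not affect the $\bm\delta_{\bar u}\to\bar{\mathbf y}$ map, which is what $\mathbf{L}_k$ records).

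**Main computation.** With $\bar{\mathbf u}=\mathbf{K}\bar{\mathbf y}+\bm\delta_{\bar u}$ and $\bar{\mathbf y}=\mathbf{G}\bar{\mathbf u}$, substitute the first into the second: $\bar{\mathbf y}=\mathbf{G}\mathbf{K}\bar{\mathbf y}+\mathbf{G}\bm\delta_{\bar u}$, so $(I-\mathbf{GK})\bar{\mathbf y}=\mathbf{G}\bm\delta_{\bar u}$, giving $\bar{\mathbf y}=(I-\mathbf{GK})^{-1}\mathbf{G}\,\bm\delta_{\bar u}$. Since $\mathbf{L}_k$ is by \eqref{eq: dualSLP_Def} precisely the operator mapping $\bm\delta_{\bar u}$ to $\bar{\mathbf y}$, this yields $\mathbf{L}_k=(I-\mathbf{GK})^{-1}\mathbf{G}$. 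To be careful I would note that $\bm\delta_\xi$ feeds into $\bm\xi$ and thence, through $C_k$ and the loop, also contributes to $\bar{\mathbf y}$, but that contribution is accounted for by $\mathbf{M}_k$, not $\mathbf{L}_k$; because the system is linear, superposition lets me isolate the $\bm\delta_{\bar u}$-to-$\bar{\mathbf y}$ channel by setting $\bm\delta_\xi=0$, which is legitimate.

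**Anticipated obstacle.** The only real subtlety is bookkeeping about \emph{where} $\bm\delta_{\bar u}$ sits relative to $\mathbf{K}$ and $\mathbf{G}$ in the loop — i.e. confirming that in the dual configuration the disturbance enters additively at the controller output (equivalently, at the plant input), so that the closed-loop relation is $(I-\mathbf{GK})^{-1}\mathbf{G}$ and not, say, $(I-\mathbf{GK})^{-1}$ or $\mathbf{G}(I-\mathbf{KG})^{-1}$. This is pinned down by the definitions $\delta_{\bar u}[t]=D_{1k}w[t]$ together with the output equation of \eqref{eq: CtrlStSpace} and the sign/ordering conventions fixed in Figure \ref{fig:sysID_Rep} and the surrounding text; tracing those carefully through the block diagram in Figure \ref{fig:SyS2} is the step that needs attention, though it is routine once the conventions are written down. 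Everything else is a one-line algebraic manipulation, and well-posedness of $(I-\mathbf{GK})^{-1}$ is guaranteed by the standing assumption that $\mathbf{K}$ internally stabilizes $\mathbf{G}$.
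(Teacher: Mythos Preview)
Your proposal is correct and follows essentially the same route as the paper: write $\bar{\mathbf u}=C_k\bm\xi+\bm\delta_{\bar u}=\mathbf{K}\bar{\mathbf y}+\bm\delta_{\bar u}$ (for strictly proper $\mathbf{K}$), substitute into $\bar{\mathbf y}=\mathbf{G}\bar{\mathbf u}$, and solve. The only cosmetic difference is that the paper keeps $\bm\delta_\xi$ in the computation and thereby also reads off $\mathbf{M}_k=(I-\mathbf{GK})^{-1}\mathbf{G}C_k(zI-A_k)^{-1}$ as a byproduct, whereas you invoke superposition to set $\bm\delta_\xi=0$ from the start.
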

\begin{proof}
Without loss of generality we consider the controller $\mathbf{K}$ to be strictly proper, i.e., $\mathbf{K} = C_k(zI-A_k)^{-1}B_k$. Note that from Figure \ref{fig:SyS2} we have $\bar{\mathbf{u}}=C_k{\bm\xi}+{\bm\delta}_{\bar{u}}$, which gives
\begin{subequations}
\begin{align}
\bar{\mathbf{y}} &= \mathbf{G}\big(C_k\bm\xi+\bm\delta_{\bar{u}}\big)\\
&= \mathbf{G}C_k(zI-A_k)^{-1}\big(B_k\bar{\mathbf{y}}+\bm\delta_{\xi}\big)+\mathbf{G}\bm\delta_{\bar{u}}\\
&= \mathbf{GK}\bar{\mathbf{y}}+\mathbf{G}C_k(zI-A_k)^{-1}{\bm\delta_{\xi}}+\mathbf{G}{\bm\delta}_{\bar{u},k}.
\end{align}
\end{subequations}
Since $\bar{\mathbf{y}} = \mathbf{M}_k\bm\delta_{\xi} + \mathbf{L}_k\bm\delta_{\bar{u}}$ from \eqref{eq: dualSLP_Def}, we have $\mathbf{M}_k = (I-\mathbf{G}\mathbf{K})^{-1}\mathbf{G}C_k(zI-A_k)^{-1}$ and $\mathbf{L}_k = (I-\mathbf{GK})^{-1}\mathbf{G}$.
\end{proof}

Below we derive the equation that maps the signal $\mathbf{r}:=\mathbf{r}_2+\mathbf{Kr}_1$ to the output $\mathbf{y}$ in terms of $\mathbf{L}_k$. The closed-loop representation in Figure \ref{fig:sysID_Rep} results in the following equation
\begin{subequations}
\begin{align}
\mathbf{y} &= \mathbf{G}\bar{\mathbf{u}}+\mathbf{Se}\\
&= \mathbf{G(Ky+r)+Se}\\
&= (I-\mathbf{GK})^{-1}\mathbf{Gr}+(I-\mathbf{GK})^{-1}\mathbf{Se}\\
&= \mathbf{L}_k\mathbf{r} + (I-\mathbf{GK})^{-1}\mathbf{Se}\label{eq: Lk_SySId}
\end{align}
\end{subequations}

We pose the following optimization problem
\begin{subequations}\label{eq: OptimProb}
\begin{align}
&\min_{\{\hat{\mathbf{R}}_k,\hat{\mathbf{M}}_k,\hat{\mathbf{N}}_k,\hat{\mathbf{L}}_k\}}  g(\mathbf{y},\hat{\mathbf{L}}_k,\mathbf{r})\\
&\text{subject to} 
\begin{bmatrix}
zI-A_k & -B_k
\end{bmatrix}
\begin{bmatrix}
\hat{\mathbf{R}}_k & \hat{\mathbf{N}}_k\\
\hat{\mathbf{M}}_k & \hat{\mathbf{L}}_k
\end{bmatrix}=\begin{bmatrix}
I & 0
\end{bmatrix},\label{eq: OptimProbC1}\\
&\qquad\qquad \begin{bmatrix}
\hat{\mathbf{R}}_k & \hat{\mathbf{N}}_k\\
\hat{\mathbf{M}}_k & \hat{\mathbf{L}}_k
\end{bmatrix}\begin{bmatrix}
zI-A_k\\
-C_k
\end{bmatrix}=
\begin{bmatrix}
I\\
0
\end{bmatrix},\label{eq: OptimProbC2}\\
&\qquad\qquad \hat{\mathbf{R}}_k,\hat{\mathbf{M}}_k,\hat{\mathbf{N}}_k\in\frac{1}{z}\mathcal{RH}_{\infty},\hat{\mathbf{L}}_k\in\mathcal{RH}_{\infty},\label{eq: OptimProbC3}
\end{align}
\end{subequations}
to estimate the dual parameters $\{\mathbf{R}_k,\mathbf{M}_k,\mathbf{N}_k,\mathbf{L}_k\}$, where $g(\cdot)$ is a functional capturing the estimation criteria of $\mathbf{L}_k$ using the open-loop equation \eqref{eq: Lk_SySId}. The estimate $\hat{\mathbf{G}}$ of the transfer function $\mathbf{G}$ is given by $\hat{\mathbf{G}}=\hat{\mathbf{L}}_k-\hat{\mathbf{M}}_k\hat{\mathbf{R}}_k^{-1}\hat{\mathbf{N}}_k$ for strictly proper controllers $\mathbf{K}$, or as described in \eqref{eq: EstG_PrpCtrl} for the case of proper controllers.

\begin{Remark}\label{rem: Consistency}
Note that the signal $\mathbf{r}$ and the noise $\mathbf{e}$ in \eqref{eq: Lk_SySId} are uncorrelated. Let $\mathcal{L}$, $\mathcal{M}$, $\mathcal{N}$, and $\mathcal{R}$ be the model class for the dual parameters comprising of the admissible solutions to \eqref{eq: OptimProbC1}-\eqref{eq: OptimProbC3}.  It is known that \cite{LjungBook2} under the assumption that the model class $\mathcal{L}$, $\mathcal{M}$, $\mathcal{N}$, and $\mathcal{R}$ contains the actual dual parameters $\mathbf{L}_k$, $\mathbf{M}_k$, $\mathbf{N}_k$, and $\mathbf{R}_k$, respectively, then the estimates $\hat{\mathbf{L}}_k$, $\hat{\mathbf{M}}_k$, $\hat{\mathbf{N}}_k$, and $\hat{\mathbf{R}}_k$ can be obtained consistently with an independently parameterized noise model.
\end{Remark}

{\em Invariance to the state-space realization of $\mathbf{K}$}: Note that even though the realization $(A_k, B_k, C_k)$ of the controller $\mathbf{K}$ define the affine constraints in \eqref{eq: OptimProb}, the optimization problem remains invariant to feasible linear transformations of the realization. More precisely, consider the non-singular linear transformation matrix $\bar{T}$ of the state $\xi$ in \eqref{eq: CtrlStSpace}. The corresponding transition matrices are given by $\bar{A}_k=\bar{T}^{-1}A_k\bar{T}$, $\bar{B}_k=\bar{T}^{-1}B_k$, and $\bar{C}_k=C_k\bar{T}$, and the constraint \eqref{eq: OptimProbC1} becomes
\begin{align}
\begin{bmatrix}
zI-\bar{T}^{-1}A_k\bar{T} & -\bar{T}^{-1}B_k
\end{bmatrix}
\begin{bmatrix}
\hat{\mathbf{R}}_k & \hat{\mathbf{N}}_k\\
\hat{\mathbf{M}}_k & \hat{\mathbf{L}}_k
\end{bmatrix}=\begin{bmatrix}
I & 0
\end{bmatrix}.
\end{align}
With simple algebraic manipulations, it can be shown that the above constraint is equivalent to the following
\begin{align}\label{eq: OptimProbC1_2}
\begin{bmatrix}
zI-A_k & -B_k
\end{bmatrix}
\begin{bmatrix}
\bar{T}\hat{\mathbf{R}}_k\bar{T}^{-1} & \bar{T}\hat{\mathbf{N}}_k\\
\hat{\mathbf{M}}_k\bar{T}^{-1} & \hat{\mathbf{L}}_k
\end{bmatrix}=\begin{bmatrix}
I & 0
\end{bmatrix}.
\end{align}
Similarly, for the realization $(\bar{A}_k,\bar{B}_k,\bar{C}_k)$, the constraint \eqref{eq: OptimProbC2} can equivalently be written as 
\begin{align}\label{eq: OptimProbC2_2}
\begin{bmatrix}
\bar{T}\hat{\mathbf{R}}_k\bar{T}^{-1} & \bar{T}\hat{\mathbf{N}}_k\\
\hat{\mathbf{M}}_k\bar{T}^{-1} & \hat{\mathbf{L}}_k
\end{bmatrix}
\begin{bmatrix}
zI-A_k\\
-C_k
\end{bmatrix}=\begin{bmatrix}
I \\
0
\end{bmatrix}.
\end{align}
Note that the dual parameter $\hat{\mathbf{L}}_k$ remains invariant to the above transformation. Thus, the functional $g(\cdot)$, which quantifies the estimation criteria of $\hat{\mathbf{L}}_k$, is invariant. Further, defining $\check{\mathbf{R}}_k:=\bar{T}\hat{\mathbf{R}}_K\bar{T}^{-1}$, $\check{\mathbf{M}}_k:=\hat{\mathbf{M}}_k\bar{T}^{-1}$, $\check{\mathbf{N}}_k:=\bar{T}\hat{\mathbf{N}}_k$, in the constraints \eqref{eq: OptimProbC1_2} and \eqref{eq: OptimProbC2_2}, we recover an equivalent optimization problem to (\ref{eq: OptimProb}). Thus, the identification problem formulation presented \eqref{eq: OptimProb} is independent of the realization of the controller $\mathbf{K}$.

\begin{Remark}
As is evident from \eqref{eq: OptimProb} and the above detail, the D-SLP method does not depend on an initial nominal plant $\mathbf{G}_0$ that is stabilized by the controller $\mathbf{K}$, as well as the realization of the controller $\mathbf{K}$. This is contrary to some of the popular closed-loop identification approaches (such as \cite{schrama1991open}, \cite{hansen1988fractional}) illustrated in the Section \ref{sec: Introduction} which depend on an initial nominal plant $\mathbf{G}_0$, along with its co-prime factors, and the co-prime factors of the controller $\mathbf{K}$. We further illustrate on this upside of D-SLP approach using simulations in the following section.
\end{Remark}

\begin{figure*}
    \centering
    \includegraphics[width=\linewidth]{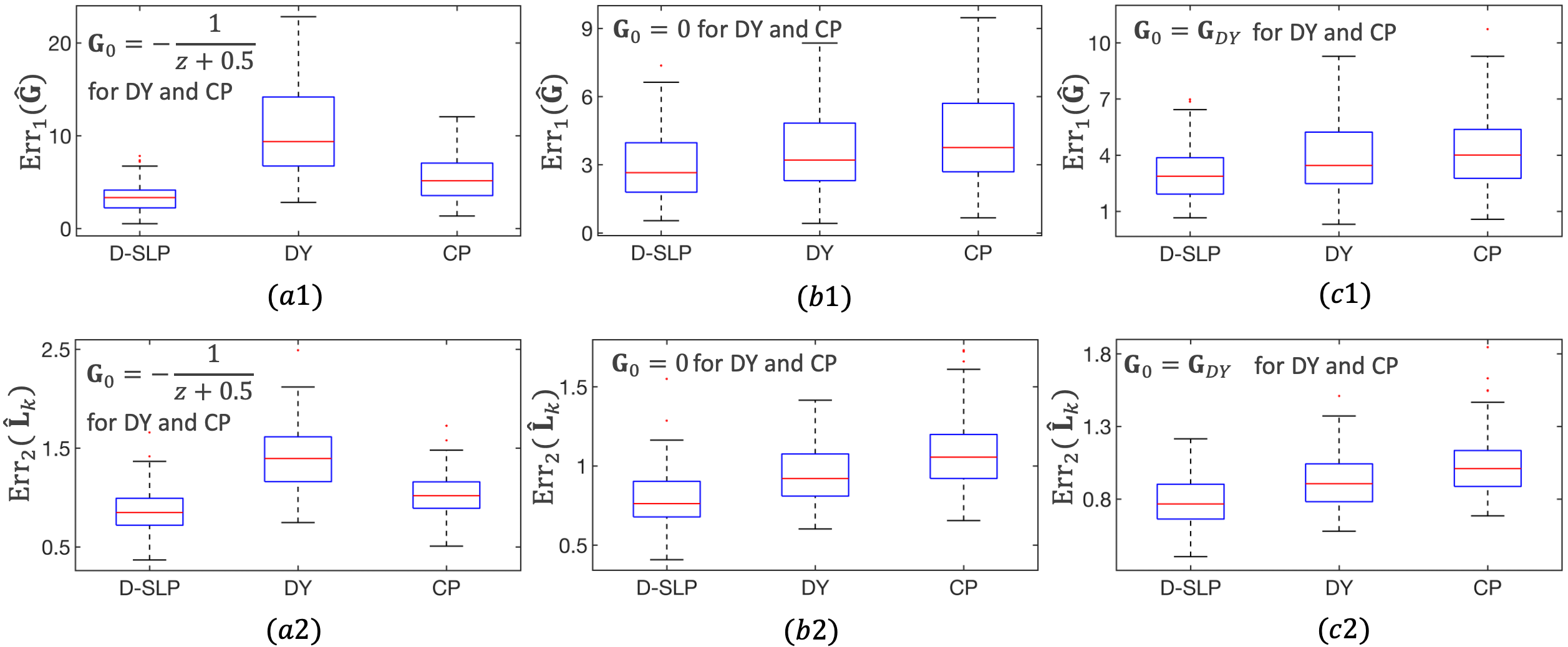}
    \caption{Monte Carlo simulations for the closed-loop system in \eqref{eq: VanDanEx}. Each box plots shows the minimum, first quantile, median, third quantile, and the maximum. Each simulation has a different noise realization. The choice of nominal plant $\mathbf{G}_0$ for the dual-Youla (DY) and the co-prime (CP) methods is mentioned in each figure. D-SLP corresponds to the dual system-level parameterization method. (a1)-(a2) The DY and CP methods do not perform well for the given choice of $\mathbf{G}_0$ in (a), and D-SLP outperforms both. (b1)-(b2) and (c1)-(c2) The choice of nominal plant $\mathbf{G}_0$ in (b) and (c) show improvements for the DY and CP method, however, the performance of D-SLP is still better than the former methods.}
    \label{fig: Simulation1}
\end{figure*}

\section{Simulations}\label{sec: Simulations}
In this section, the D-SLP identification scheme is compared with the existing benchmark approaches: a) the co-prime identification scheme \cite{schrama1991open} and b) the dual-Youla method \cite{hansen1988fractional}. Note that, by design, the dual-Youla method and our proposed D-SLP framework, guarantee that the estimated transfer function $\hat{\mathbf{G}}$ is stabilized by the known controller $\mathbf{K}$. However, our proposed D-SLP method is independent of the several hyperparameters that are otherwise required by the co-prime factorization and the dual-Youla methods. Below we provide very concise illustrations of the latter approaches. Please refer to \cite{hansen1988fractional} and \cite{schrama1991open} for more details on the individual methods.

\subsection{Brief Overview of the Benchmark Methods}
Let $\mathbf{G}$ be the transfer function to be identified in the closed-loop representation in Figure \ref{fig:sysID_Rep}, and $\mathbf{K}$ be its stabilizing controller. Let $\mathbf{G}_0$ be a nominal plant that is stabilized by $\mathbf{K}$, with $\mathbf{N}_0$ and $\mathbf{D}_0$ as its stable co-prime factors. The dual-Youla identification involves the following steps.
\begin{itemize}
\item[(a)] Let $\mathbf{X}_0$ and $\mathbf{Y}_0$ be the stable co-prime factors of $\mathbf{K}$.
\item[(b)] Determine the signals $\bm\beta:=\mathbf{D}_0\mathbf{y}-\mathbf{N}_0\mathbf{u}$ and $\bm\alpha=\mathbf{Y}_0\mathbf{r}$.
\item[(c)] Estimate the Youla parameter $\mathbf{R}\in\mathcal{RH}_{\infty}$, using the open-loop equation
\begin{align}\label{eq: OpenLoopDY}
\bm\beta = \mathbf{R}\bm\alpha+\mathbf{Fe}.
\end{align}
\item[(d)] The $\mathbf{G}$ estimate is given by $\hat{\mathbf{G}}=\frac{\mathbf{N}_0+\hat{\mathbf{R}}\mathbf{Y}_0}{\mathbf{D}_0-\hat{\mathbf{R}}\mathbf{X}_0}$, where $\hat{\mathbf{R}}$ is the estimate of $\mathbf{R}$ in \eqref{eq: OpenLoopDY}.
\end{itemize}
The steps involved in the co-prime factorization method are
\begin{itemize}
\item[(a)] Let $\mathbf{N}\in\mathcal{RH}_{\infty}$ and $\mathbf{D}\in\mathcal{RH}_{\infty}$ be the right co-prime factors of $\mathbf{G}$. These co-prime factors are identified in an open-loop way using the following equations
\begin{align}\label{eq: CP_Eqns}
\mathbf{u} = \mathbf{D}\mathbf{x} - \mathbf{X}_0\mathbf{Se},\quad
\mathbf{y} = \mathbf{N}\mathbf{x} + \mathbf{Y}_0\mathbf{Se},
\end{align}
where the signal $\mathbf{x} = (\mathbf{D}_0+\mathbf{CN}_0)^{-1}\mathbf{r}$ in the context of the representation in Figure \ref{fig:sysID_Rep}, and $\mathbf{X}_0$ and $\mathbf{Y}_0$ are the right co-prime factors of the controller $\mathbf{K}$.
\item[(b)] The $\mathbf{G}$ estimate is given by $\hat{\mathbf{G}}=\hat{\mathbf{N}}\big(\hat{\mathbf{D}}\big)^{-1}$, where $\hat{\mathbf{N}}$ and $\hat{\mathbf{D}}$ are the estimates of $\mathbf{N}$ and $\mathbf{D}$, respectively, in \eqref{eq: CP_Eqns}.
\end{itemize}

Note that both the co-prime factorization and the dual-Youla parameterization methods are dependent on (i) the choice of a nominal plant $\mathbf{G}_0$ that is stabilized by the controller $\mathbf{K}$, and (ii) the co-prime factors $\mathbf{N}_0$, $\mathbf{D}_0$ of $\mathbf{G}_0$. Further, the signal $\bm\alpha$ in the dual-Youla method is obtained by filtering $\mathbf{r}$ through $\mathbf{Y}_0$, thus making it dependent on the choice of the co-prime factors $\mathbf{X}_0, \mathbf{Y}_0$ of $\mathbf{K}$. On the contrary, the D-SLP method requires neither a nominal plant $\mathbf{G}_0$, nor its co-prime factorization and the co-prime factors of the controller. In other words, the benefit of the D-SLP identification scheme presented in Section \ref{sec: SysID_D_SLP} is independent of the choice of hyper-parameters that are otherwise required in \cite{schrama1991open} and \cite{hansen1988fractional}. 

\subsection{Choice of Model Structure}\label{sec: Remark1}
Similar to what is presented in \cite{wang2019system}, under the dual system-level parameterization of the transfer function $\mathbf{G}$, one of the convenient ways to solve the optimization problem \eqref{eq: OptimProb} is under the assumption that the dual parameters $\{\hat{\mathbf{R}}_k,\hat{\mathbf{M}}_k,\hat{\mathbf{N}}_k,\hat{\mathbf{L}}_k\}$ are finite impulse responses (FIR). For instance, the subspace constraint $\hat{\mathbf{R}}_k\in\frac{1}{z}\mathcal{RH}_{\infty}$ is easily satisfied for the FIR case $\hat{\mathbf{R}}_k=\sum_{i=0}^T z^{-i-1}\hat{R}_k[i]$, where $T$ denotes the horizon of the impulse response, and $\hat{R}_k[i]$ for all $0\leq i\leq T$ are the impulse response elements. Further, the affine constraints in the optimization problem \eqref{eq: OptimProb} translate to affine constraints on the FIR coefficients $\{\hat{R}_k[t],\hat{M}_k[t],\hat{N}_k[t],\hat{L}_k[t]\}_{i=0}^T$. It is shown in \cite{wang2019system} that for a controllable and observable system $(A_k,B_k,C_k)$ such FIR representations of the dual parameters are always feasible for suitably chosen horizons $T$. When the system $(A_k,B_k,C_k)$ is only stabilizable and/or detectable such that the FIR feasibility cannot be satisfied, the relaxation similar to those on the system-level parameters presented in \cite{8264168} can be used. Note that similar FIR modelling assumptions are also applicable to other closed-loop identification techniques. For instance, the sensitivity function identified in the indirect method \cite{van1993indirect}, the Youla parameter in \cite{hansen1988fractional}, and the co-prime factors identified in \cite{schrama1991open} are constrained to lie in $\mathcal{RH}_{\infty}$.

\subsection{Numerical Examples}
Consider the example setting as illustrated in \cite{van1993indirect}, where
\begin{align}\label{eq: VanDanEx}
\begin{split}
\mathbf{G} &= \frac{z^2}{z^2-1.6z+0.89},\quad\mathbf{K} = \frac{z-0.8}{z^2},\\
\mathbf{S} &= \frac{z^3-1.56z^2+1.045z-0.3338}{z^3-2.35z^2+2.09z-0.6675}.
\end{split}
\end{align}
The external noise vector $e[t]$ is sampled from a zero-mean Gaussian distribution $\mathcal{N}(0,\gamma^2)$. For the purpose of simulations, we consider $\mathbf{r}_1$ to be 0, $\mathbf{r}_2$ to be a periodic pseudorandom binary sequence (PRBS) of magnitude $\{+10,-10\}$ with a period $m=10$ and each period of length $n=2^9-1$, and $\gamma=2$. We also consider FIR model structure (with a horizon $T=15$) for the dual system-level parameters, the Youla parameters, and the co-prime factors as discussed in Section \ref{sec: Remark1}. The impulse response elements in the D-SLP method are determined by minimizing the cost $g=\|y_0^{\bar{n}}-\Phi(r_0^{\bar{n}})L_k^T\|_2^2$ subject to constraints in \eqref{eq: OptimProb}, where $y_0^{\bar{n}}=[y_0~y_1~\cdots~y_{\bar{n}}]^\top\in\mathbb{R}^{\bar{n}}$ denotes the output data trajectory of length $\bar{n}=nm$, $r_0^{\bar{n}}=[r_0~r_1~\cdots~r_{\bar{n}}]\in\mathbb{R}^{\bar{n}}$ denotes the input data trajectory of length $\bar{n}$, $\Phi(r)\in\mathbb{R}^{\bar{n}\times (T+1)}$ is the toeplitz matrix with first column as $r_0^{\bar{n}}$ and first row as $[r_0~0\cdots~0]\in\mathbb{R}^{T+1}$, and $L_k^T=[L_k[0]~L_k[1]~\cdots~L_k[T+1]]^\top$ is a vector of impulse response elements of $\mathbf{L}_k$. Note that this choice of $g$ fits the data to the input-output relationship $y_k=\sum_{i=0}^{\bar{n}}L_k[i]r_{k-i} + v(k)$, for all $k\in\{0,1,\cdots,T\}$, where the input $r_k=0$ for all $k<0$ assuming the system at rest, and $v(k)$ is the filtered noise. Similar, input-output relationship and cost functions are chosen in the identification of $\mathbf{R}$ in \eqref{eq: OpenLoopDY} (dual-Youla), and $\mathbf{N},\mathbf{D}$ in \eqref{eq: CP_Eqns} (co-prime factorization scheme).

To assess the performance of each of the identification schemes, 100 Monte Carlo simulations with different noise realizations are performed. It is shown that the D-SLP approach performs better than the dual-Youla (DY) and the co-prime (CP) factorization methods on the closed-loop system specified in \eqref{eq: VanDanEx}, for different choices of the nominal plant $\mathbf{G}_0$ that is required for the DY and CP methods. More precisely, we consider the following three choices for $\mathbf{G}_0$:
\begin{itemize}
\item[(a)] $\mathbf{G}_0=-\frac{1}{z+0.5}$ - arbitrarily selected and stabilized by $\mathbf{K}$.
\item[(b)] $\mathbf{G}_0=0$ - the resulting closed-loop is a zero transfer function, hence stable.
\item[(c)] $\mathbf{G}_0=\hat{\mathbf{G}}_{DY}$ - the estimate of the transfer function $\mathbf{G}$ as given by the dual-Youla method for the case (b) above. This choice is analogous to a two-stage approach in which the dual-Youla method improves upon its estimate of $\hat{\mathbf{G}}$ from the first stage (i.e., when the choice of $\mathbf{G}_0=0$), using additional data.
\end{itemize} 
The performance of the different identification schemes is quantified by the two metrics below
\begin{subequations}\label{eq: PerfMetric2}
\begin{align}
{\tt Err}_1 (\hat{\mathbf{G}}) &= \sum_{i=1}^n 100\frac{\big\|\mathbf{G}(j\omega_i)-\hat{\mathbf{G}}(j\omega_i)\big\|_2}{\big\|\mathbf{G}(j\omega_i)\big\|_2}\\
{\tt Err}_2 (\hat{\mathbf{L}}_k) &= \sum_{i=1}^n 100\frac{\big\|\mathbf{L}_k(j\omega_i)-\hat{\mathbf{L}}_k(j\omega_i)\big\|_2}{\big\|\mathbf{L}_k(j\omega_i)\big\|_2}\\
\text{where }\mathbf{L}_k(j\omega_i) &= \big(I-\mathbf{G}(j\omega_i)\mathbf{K}(j\omega_i)\big)^{-1}\mathbf{G}(j\omega_i), \\
\hat{\mathbf{L}}_k(j\omega_i) &= \big(I-\hat{\mathbf{G}}(j\omega_i)\mathbf{K}(j\omega_i)\big)^{-1}\hat{\mathbf{G}}(j\omega_i),
\end{align}
\end{subequations}
$\hat{\mathbf{G}}$ is the estimate of the transfer function $\mathbf{G}$, $n$ is the length of the signal $\mathbf{r}$, and $\omega_i$'s are $n$ equally spaced frequencies in the range $[0,\pi]$. Note that ${\tt Err}_1(\hat{\mathbf{G}})$ captures the error in the estimate of the plant $\mathbf{G}$, and ${\tt Err}_2(\hat{\mathbf{L}}_k)$  captures the error in the closed-loop plant resulting from the estimate $\hat{\mathbf{G}}$. As illustrated in Figure \ref{fig: Simulation1}, the median of ${\tt Err}_1(\hat{\mathbf{G}})$ and ${\tt Err}_2(\hat{\mathbf{L}}_k)$ is the least for the D-SLP scheme for all the above three choices of the nominal plant $\mathbf{G}_0$ in the DY and CP methods. The same observation holds true regarding the spread of the boxplots of ${\tt Err}_1(\hat{\mathbf{G}})$ and ${\tt Err}_2(\hat{\mathbf{L}}_k)$ as shown in Figure \ref{fig: Simulation1}. Note that the performance of D-SLP method remains the same across Figures \ref{fig: Simulation1}(a), \ref{fig: Simulation1}(b), and \ref{fig: Simulation1}(c), as it is independent of the choice of nominal plant $\mathbf{G}_0$; however, very minute difference arises due to different noise realization in each experimental run performed for the simulations in Figures \ref{fig: Simulation1}(a), \ref{fig: Simulation1}(b), and \ref{fig: Simulation1}(c).

Similar observations as above are seen for the case of the proper controller $\mathbf{K}=\frac{z^2-0.8z}{z^2}$ that stabilizes the plant $\mathbf{G}$ in \eqref{eq: VanDanEx}. Note that here the estimate $\hat{\mathbf{G}}$ of the plant is given by \eqref{eq: EstG_PrpCtrl}. Finally, Figure \ref{fig: Simulation2} demonstrates the asymptotical convergence of the error ${\tt Err}_1(\hat{\mathbf{G}})$ in the estimated plant. This observation is in-line with the Remark \ref{rem: Consistency} on consistency of the D-SLP method in Section \ref{sec: D_SLP}.

\begin{figure}
    \centering
    \includegraphics[width=0.95\columnwidth]{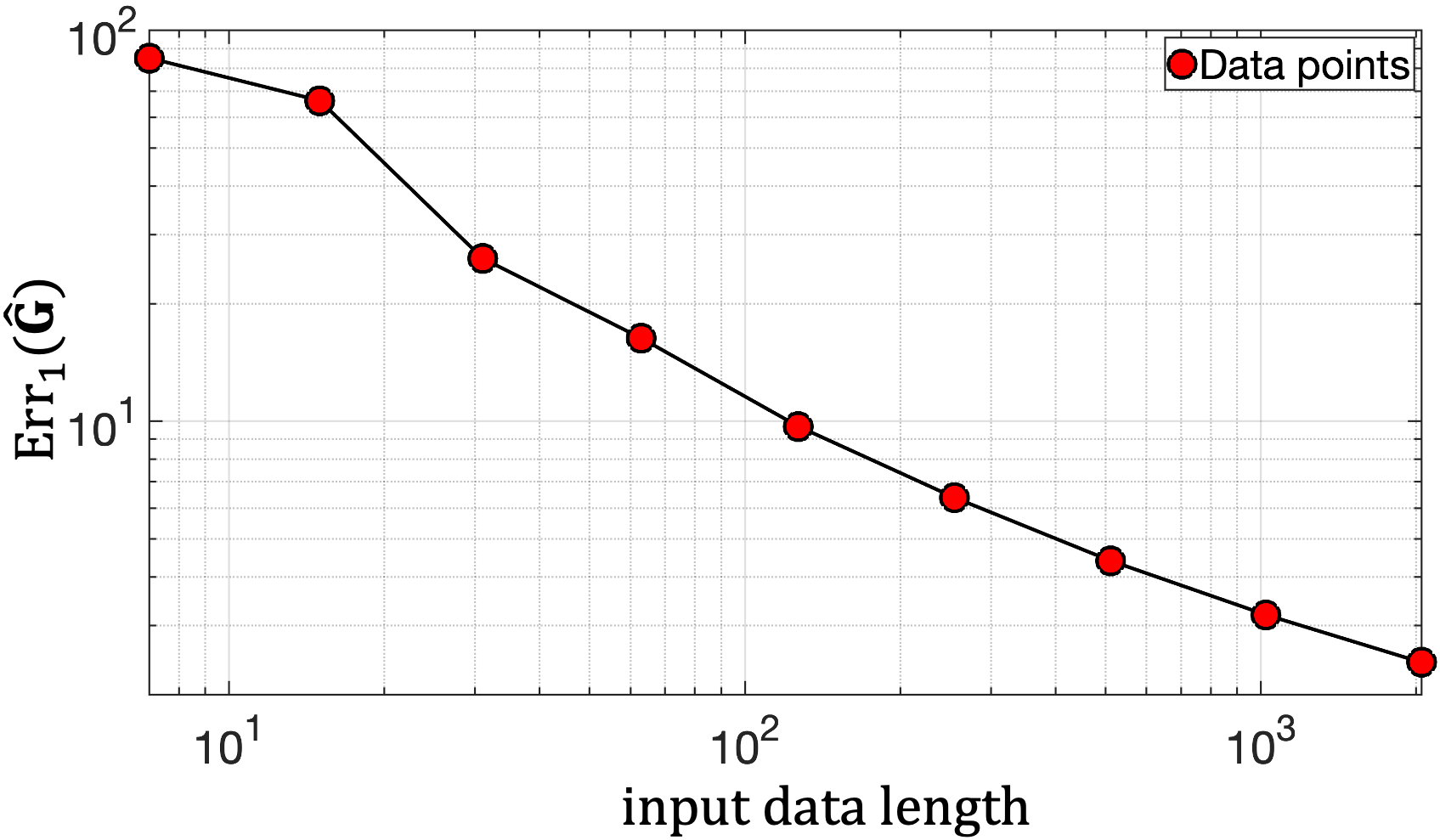}
    \caption{Plot of ${\tt Err}_1(\mathbf{G})$ versus input data length $n$; shows asymptotic convergence of error.}
    \label{fig: Simulation2}
\end{figure}

\section{Conclusion}\label{sec: Conclusions}
In this work, we develop an identification methodology using closed-loop data which uses the dual system level parameterization of plants that are stabilized by a given controller. We demonstrate its capabilities in terms of a) giving a plant estimate that is stabilized by the controller, b) being independent of the choice of controller realizations and several hyper-parameters (such as a nominal plant, its co-prime factors, and the factorization of the controller) that occur in different benchmark methods, and c) outperforming the benchmark methods in terms of the plant and closed-loop identification errors. Just as the system level parameterization \cite{wang2019system} is useful for controller design in the large-scale decentralized system, the utility of proposed D-SLP framework in identification for large scale networks forms the future research direction for this work.

\bibliographystyle{IEEEtran}
\bibliography{biblio_SysID}
\end{document}